\newtheorem{theorem}{Theorem}
\newtheorem{corollary}{Corollary}[theorem]
\newtheorem{proposition}{Proposition}
\newtheorem{definition}{Definition}
\newtheorem*{definition*}{Definition}
\newtheorem*{example*}{Example}
\newtheorem*{conditional*}{Conditional}
\newtheorem*{remark*}{Remark}
\definecolor{Gray}{gray}{0.85}
\definecolor{LightCyan}{rgb}{0.88,1,1}
\newcolumntype{a}{>{\columncolor{Gray}}c}
\newcolumntype{b}{>{\columncolor{Gray}}r}
\newcolumntype{d}{>{\columncolor{white}}c}
\newcommand{\cE}{{\cal E}}
\newcommand{\scrL}{{\mathscr L}}
\providecommand{\keywords}[1]
{
  \small	
  \textbf{\textit{Keywords---}} #1
}
\title{
Stable Densities, Fractional Integrals 
and the 
Mittag-Leffler Function 
}
\author{Nomvelo Karabo Sibisi \\{\small {\tt sbsnom005@myuct.ac.za}}}
\date{\today}
\begin{document}
\maketitle
\thispagestyle{empty}


\begin{abstract}
\noindent
This paper combines  probability  theory and fractional calculus to derive  a novel integral representation of the 
three-parameter Mittag-Leffler function or  Prabhakar function, 
where the  three parameters  are combinations of four base  parameters.
The fundamental concept  
is the Riemann-Liouville fractional integral of the one-sided  stable density, conditioned on a scale factor.
Integrating  with respect to a gamma  distributed scale factor 
induces a  mixture  of  Riemann-Liouville integrals. 
A particular combination of four base parameters leads to a representation of the Prabhakar function
as a weighted mixture of Riemann-Liouville  integrals at different scales.
The Prabhakar function  constructed in this manner is the Laplace transform of a four-parameter   distribution.
This  general approach gives various  known results  as special cases (notably,  the two-parameter generalised Mittag-Leffler distribution).
\end{abstract}
\keywords{
stable \& gamma distributions; 
Riemann-Liouville fractional integration; 
 Mittag-Leffler \& Prabhakar function; 
generalised Mittag-Leffler distribution; complete monotonicity.
}

\section{Introduction}
\label{sec:intro}

There is an   intimate relationship between Mittag-Leffler  functions and  fractional calculus,  
as comprehensively discussed in   Gorenflo {\it et al.}~\cite{gorenflo2014mittag}. 
The three-parameter Mittag-Leffler function known as   the  Prabhakar function has such a  prominent role in this context  as to inspire the phrase
 ``Prabhakar fractional calculus''  (Giusti {\it et al.}~\cite{Giusti}).
At its core,  this involves fractional integrals and derivatives of functions involving the Prabhakar function. 

This paper constructs the Prabhakar function itself from fractional integration combined with probability theory. 
The point of departure is 
the density of the one-sided stable distribution, conditioned on a scale parameter,
to which we assign a gamma distribution.
Integrating over the scale parameter  leads to a novel   integral representation  
of the Prabhakar function as a weighted mixture of Riemann-Liouville  integrals of stable densities.
The formal statement of this construction  is contained  in Theorem~\ref{thm:ML4par}. 
The three parameters of the  Prabhakar function involved in the theorem are, in turn, constructed as combinations of four other parameters
that we refer to as base parameters.

Theorem~\ref{thm:ML4par} has an interesting corollary -- 
the integral representation readily leads to a proof that the Prabhakar function constructed  from the four parameters is completely monotone,
{\it i.e.}\ that the Prabhakar function is the Laplace  transform of a distribution  parameterised by the four base parameters.
This proof arises from a probabilistic foundation, as opposed to the complex analytic approaches that are normally adopted in the exploration
of the completely monotone character  of Mittag-Leffler functions.
Accordingly, we shall devote a good proportion of this paper to a discussion of the corollary on complete monotonicity.

The study of the complete monotonicity of the Mittag-Leffler function
dates back at least to  a 1948 paper by Pollard~\cite{PollardML},
who  used  a complex analytic method to prove the property for the 
one-parameter Mittag-Leffler function (Gorenflo {\it et al.}~\cite{gorenflo2014mittag} (3.7.2) reproduces Pollard's argument).
G\'{o}rska {\it  et al.}~\cite{Gorska} adapted  Pollard's complex analytic method to prove the complete monotonicity of  the 
three-parameter Mittag-Leffler function.

The Mittag-Leffler function also has an intimate connection with probability theory.
In his proof, 
Pollard noted that the ultimate result eluded him ``without the intervention of  
[the  Laplace transform of the stable distribution]".
While  he did not  use probabilistic   language, 
Pollard cited personal communication by Feller of a discovery of the result by  ``methods of probability theory''.

The completely monotone character of the Mittag-Leffler function is  much more than an abstract mathematical curiosity.
It plays a  fundamental role in models of  physical  phenomena 
such as anomalous dielectric relaxation and viscoelasticity as mentioned, for example, by 
de Oliviera {\it  et al.}~\cite{Oliveira}, Garra and Garrappa~\cite{Garra},
G\'{o}rska {\it  et al.}~\cite{Gorska}, Mainardi and Garrappa~\cite{MainardiGarrappa}.

The conceptual underpinning  of this paper, in the form of probability theory and fractional calculus,
bears similarity to the work of Ho {\it  et al.}~\cite{HoJamesLau} but there is a difference in context and purpose.
The   latter paper is on random partitions of the integers, with a different discussion of the Prabhakar function 
from that forming the central theme of this paper.

We start by introducing  the primary concepts in Section~\ref{sec:primary} required for our main contribution in Section~\ref{sec:main}.
This is followed by a discussion  of various known  distributions  arising  as special cases of Theorem~\ref{thm:ML4par},
notably the generalised Mittag-Leffler distribution.

\section{Primary Concepts}
\label{sec:primary}

\subsection{Distributions}
\label{sec:stable}

\subsubsection{Stable Distribution}
\label{sec:distributions}

The one-sided  stable distribution $F_\alpha(x\vert t)$ ( $0<\alpha<1$)  on $x\ge0$, 
conditioned  on a  scale parameter $t>0$, is indirectly defined by   its Laplace-Stieltjes transform
$($equivalently, the ordinary Laplace transform of its density $f_\alpha(x\vert t)$)
\begin{align}
e^{-t s^\alpha} &= \int_0^\infty e^{-sx}\,dF_\alpha(x\vert t) =  \int_0^\infty e^{-s x}  f_\alpha(x\vert t)\, dx
\label{eq:stable} 
\end{align}
The explicit form of $F_\alpha$ will not concern us here.
In any case, it is only known in closed form for selected values of $\alpha$, the simplest  being for $\alpha=1/2$.
We  write $F_\alpha(x)\equiv F_\alpha(x\vert t=1)$ and $f_\alpha(x)\equiv f_\alpha(x\vert t=1)$. 
It follows that $F_\alpha(x\vert t) \equiv F_\alpha(xt^{-1/\alpha})$
and $f_\alpha(x\vert t) \equiv f_\alpha(x t^{-1/\alpha})\, t^{-1/\alpha}$.
We may include $\alpha=1$ by defining $f_{\alpha=1}(x\vert t)=\delta(x-t)$ ($\delta$ is the Dirac delta function) 
with Laplace transform $e^{-t s}$.

\subsubsection{Gamma Distribution}
\label{sec:gamma}

The  gamma distribution  $G(x\vert \mu,\lambda)$  ($x>0$), with shape and scale parameters  $\mu>0, \lambda\ge0$ respectively,
is given  by 
\begin{align}
dG(x \vert \mu,\lambda) 
 &=\frac{1}{\Gamma(\mu)}\,x^{\mu-1}e^{-\lambda x}\, dx 
\label{eq:gammadistribution} 
\end{align}
We have omitted the usual normalising  term $\lambda^\mu$ 
in order to accommodate the $\lambda=0$ case.

\subsection{Mittag-Leffler Function}
\label{sec:mitlef}

The (one-parameter) Mittag-Leffler function 
$E_\alpha(x)$  is defined by  the infinite series 
\begin{align}
E_\alpha(x) &= \sum_{k=0}^\infty \frac{x^k}{\Gamma(\alpha k+1)} \quad \alpha\ge0
\label{eq:ML}
\end{align}
The Laplace transform of 
$E_\alpha(-\lambda x^\alpha)$ $(x\ge0, \lambda\ge0)$ is 
\begin{align}
\int_0^\infty e^{-sx} E_\alpha(-\lambda x^\alpha) \,dx  &= \frac{s^{\alpha-1}}{\lambda+s^\alpha} \qquad {\rm Re}(s)\ge0
\label{eq:LaplaceML}
\end{align}

There is a  three-parameter generalisation  of the Mittag-Leffler function, also known as the Prabhakar function,  defined  by  
\begin{align}
E^\gamma_{\alpha,\beta}(x) &= \frac{1}{\Gamma(\gamma)} 
   \sum_{k=0}^\infty \frac{\Gamma(\gamma+k)}{k!\,\Gamma(\alpha k+\beta)}\, x^k  
\label{eq:ML3parseries}
\end{align}
The one-parameter Mittag-Leffler function $E_\alpha(x)$ is the special case $\gamma=\beta=1$. 
The Laplace transform of $\cE^\gamma_{\alpha,\beta}(x\vert \lambda)  \equiv x^{\beta-1}E^\gamma_{\alpha,\beta}(-\lambda x^\alpha)$  is
\begin{align}
\int_0^\infty e^{-sx} \, \cE^\gamma_{\alpha,\beta}(x\vert \lambda)  \,dx  
  &= \frac{s^{\alpha\gamma-\beta}}{(\lambda+s^\alpha)^\gamma} 
\label{eq:LaplaceML3par}
\end{align}
This is known as Prabhakar's result in other literature, while $\cE^\gamma_{\alpha,\beta}(x\vert \lambda)$ is known as the Prabhakar kernel.

\subsection{Fractional Integration}
\label{sec:fracint}

The right-sided Riemann-Liouville fractional integral of a function $f(x)$ on $x\ge0$ for $\nu>0$ is defined by
\begin{align}
\{I_+^\nu\, f\}(x) &\equiv \{h_\nu\star f\}(x) = \frac{1}{\Gamma(\nu)} \int^x_0 (x-u)^{\nu-1} f(u)\, du 
\label{eq:RL} 
\intertext{where $h_\nu(x)=x^{\nu-1}/\Gamma(\nu)$ and $h_\nu\star f$ denotes Laplace convolution.
By the convolution theorem, the Laplace transform of {\rm (\ref{eq:RL})} is} 
\scrL\{I_+^\nu\, f\}(s) &\equiv \scrL\{h_\nu\star f\}(s) =   \widetilde h_\nu(s)\times\widetilde f(s) = s^{-\nu}\widetilde f(s) 
\label{eq:RLLaplace}
\end{align}
where $\scrL\{f\}$ and  $\widetilde f$ both denote the Laplace  transform of $f$.
Since (\ref{eq:RLLaplace})  reduces to 
$\widetilde f(s)$   for $\nu=0$,
we may  define the 
fractional integral for $\nu=0$ as  $\{I_+^{\,0}\, f\}(x)\equiv \{h_0\star f\}(x)=  f(x)$ 
where $h_0(x)=\delta(x)$.
We shall continue to  use the form~(\ref{eq:RL})  for   $\{I_+^{\nu}\, f\}(x)$ ($\nu\ge0$), with the implicit  understanding that, for $\nu=0$,
$\{I_+^{\,0}\, f\}(x) = f(x)$.

This definition of the Riemann-Liouville fractional integral, equivalent to Laplace convolution, suffices for our purposes.
In a more general definition of (\ref{eq:RL}), the integration lower limit  need not be zero,  
in which case the  Laplace convolution equivalence  no longer holds.
There is also a left-sided variant $I_-^\nu\, f$  that  is not relevant here.

We shall denote  the  Riemann-Liouville fractional integral of a distribution  $F(x)$ as
\begin{align}
\frac{1}{\Gamma(\nu)} \int^x_0 (x-u)^{\nu-1} dF(u) \quad \nu\ge0
\label{eq:RLdistribution} 
\end{align}
If $F(x)$ has a density $f(x)$, then (\ref{eq:RLdistribution}) and (\ref{eq:RL})  are equivalent.
This  holds for the stable distribution $F_\alpha(x\vert t )$, density $f_\alpha(x\vert t )$, whose fractional integral is our fundamental object of study.

The Riemann-Liouville fractional integral satisfies what is often referred to  as the  semigroup property 
$I_+^{\nu_1} I_+^{\nu_2} = I_+^{\nu_1+\nu_2}$.
This is equivalent to the convolution statement that $h_{\nu_1}\star h_{\nu_2} = h_{\nu_1+\nu_2}$ or the Laplace transform equivalent
$s^{-\nu_1}s^{-\nu_2}=s^{-(\nu_1+\nu_2)}$.

\subsection{Complete Monotonicity}
\label{sec:CM}

An infinitely differentiable function $\varphi(x)$ on $x>0$ is completely monotone  if its derivatives $\varphi^{(n)}(x)$ 
satisfy $(-1)^n\varphi^{(n)}(x)\ge0$, $n\ge 0$.
Bernstein's theorem 
states that $\varphi(x)$ is completely monotone iff it may be expressed as 
the Laplace-Stieltjes transform
\begin{align}
\varphi(x) &= \int_0^\infty e^{-x t}\,dF(t) 
\label{eq:LT} 
\end{align}
for  a non-decreasing distribution function $F(t)$ 
$($we shall refer to the  Laplace-Stieltjes transform merely as the Laplace transform, except when confusion might arise$)$.
For bounded $F(t)$, $\varphi(x)$ is defined on $x\ge0$.

\section{Main Contribution}
\label{sec:main}

The fundamental probabilistic construct of this paper, underpinning all else that follows, 
is the  Riemann-Liouville fractional integral of the conditional stable density $f_\alpha(x\vert t)$ ($0<\alpha\le1$):
\begin{alignat}{3}
\{I_+^\nu\, f_\alpha(\cdot\vert t)\}(x) &\equiv \{h_\nu\star f_\alpha(\cdot\vert t)\}(x) 
   &&= \frac{1}{\Gamma(\nu)} \int^x_0 (x-u)^{\nu-1} f_\alpha(u\vert t)\, du 
 \label{eq:RLstable} \\
& &&=  \frac{1}{\Gamma(\nu)} \int^x_0 (x-u)^{\nu-1} dF_\alpha(u\vert t) 
 \label{eq:RLstableDistribution} \\
\scrL\{I_+^\nu\, f_\alpha(\cdot\vert t)\}(s) 
&= s^{-\nu}\,e^{-t s^\alpha}
\label{eq:RLLaplacestableDistribution}
\end{alignat}
For completeness, $\{I_+^0 \, f_\alpha(\cdot\vert t)\}(x)=f_\alpha(x\vert t)$ and, in keeping  with   $f_{\alpha=1}(x\vert t)=\delta(x-t)$,
\begin{align}
\{I_+^\nu\, f_{\alpha=1}(\cdot\vert t)\}(x) 
&=
\begin{cases}
    \frac{1}{\Gamma(\nu)}  (x-t)^{\nu-1}  & t\le x \\
    0              & t>x 
\end{cases}
 \label{eq:RLstable1} 
\end{align}
$\{I_+^\nu\, f_\alpha(\cdot\vert t)\}(x)$, $\nu\ge0$,  is clearly nonnegative.
The key idea  is to assign a distribution $G(t)$, say,  to the scale factor $t$ and 
integrate with respect to $G(t)$ to generate a mixture density.
Rather than just  settling on the single value $t=1$, 
this takes into account all values of $t$, each weighted by the corresponding $G(t)$.
Although one might explore various choices of $G(t)$, 
our particular  interest  here is in the gamma distribution $G(t\vert \mu, \lambda)$ of~(\ref{eq:gammadistribution}) above.

Proposition~\ref{prop:RLstable} states a   property of $\{I_+^\nu\, f_\alpha(\cdot\vert t)\}(x)$ that 
is key to the  theorem that follows. 

\begin{proposition}
\label{prop:RLstable}
$\{I_+^\nu\, f_\alpha(\cdot\vert t)\}$ and $\{I_+^\nu\, f_\alpha\}\equiv\{I_+^\nu\, f_\alpha(\cdot\vert t=1)\}$, $\nu\ge0$,
are related by the   identity
\begin{align}
 \{I_+^\nu\, f_\alpha(\cdot\vert t)\}(x) &= t^{(\nu-1)/\alpha} \{I_+^\nu\, f_\alpha\}(xt^{-1/\alpha})
\label{eq:RLstable}
\end{align}
\end{proposition}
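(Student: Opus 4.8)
The plan is to reduce the conditional integral at scale $t$ to the unit-scale integral by a single change of variables, exploiting the elementary scaling relation $f_\alpha(u\vert t)=f_\alpha(ut^{-1/\alpha})\,t^{-1/\alpha}$ recorded in Section~\ref{sec:distributions}. First I would write out the defining convolution,
\[
\{I_+^\nu\, f_\alpha(\cdot\vert t)\}(x)=\frac{1}{\Gamma(\nu)}\int_0^x (x-u)^{\nu-1} f_\alpha(ut^{-1/\alpha})\,t^{-1/\alpha}\,du,
\]
and then substitute $v=ut^{-1/\alpha}$, so that $du=t^{1/\alpha}\,dv$ and the upper limit becomes $xt^{-1/\alpha}$. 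The density prefactor $t^{-1/\alpha}$ cancels the Jacobian $t^{1/\alpha}$, leaving the kernel $(x-vt^{1/\alpha})^{\nu-1}$ to be simplified.

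The key algebraic step is to extract the scale from the kernel by writing $x-vt^{1/\alpha}=t^{1/\alpha}\bigl(xt^{-1/\alpha}-v\bigr)$, whence $(x-vt^{1/\alpha})^{\nu-1}=t^{(\nu-1)/\alpha}\,(xt^{-1/\alpha}-v)^{\nu-1}$. Pulling the constant $t^{(\nu-1)/\alpha}$ outside the integral leaves precisely $\tfrac{1}{\Gamma(\nu)}\int_0^{xt^{-1/\alpha}}(xt^{-1/\alpha}-v)^{\nu-1}f_\alpha(v)\,dv$, which is $\{I_+^\nu\, f_\alpha\}$ evaluated at $xt^{-1/\alpha}$. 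This yields the claimed identity.

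As an independent check, and an alternative proof, I would verify the identity on the Laplace side, where uniqueness of the transform makes it immediate. The transform of the left-hand side is $s^{-\nu}e^{-ts^\alpha}$ by~(\ref{eq:RLLaplacestableDistribution}). For the right-hand side, the substitution $y=xt^{-1/\alpha}$ together with the unit-scale transform $\scrL\{I_+^\nu f_\alpha\}(s)=s^{-\nu}e^{-s^\alpha}$ gives $t^{(\nu-1)/\alpha}\cdot t^{1/\alpha}\,\scrL\{I_+^\nu f_\alpha\}(st^{1/\alpha})$, and collecting the powers of $t$ reproduces $s^{-\nu}e^{-ts^\alpha}$ exactly.

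I do not anticipate a genuine obstacle: the statement is a scaling identity, and the only thing to watch is the bookkeeping of the powers of $t$, in particular that the Jacobian and the density prefactor cancel while the kernel contributes the surviving factor $t^{(\nu-1)/\alpha}$. It then remains only to record the boundary cases: for $\nu=0$ the identity collapses to the density scaling itself, and for $\alpha=1$ one checks it directly against the explicit form~(\ref{eq:RLstable1}), where $t^{\nu-1}(xt^{-1}-1)^{\nu-1}=(x-t)^{\nu-1}$ recovers the stated piecewise expression on $t\le x$.
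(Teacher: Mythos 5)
Your proof is correct and follows essentially the same route as the paper's: the substitution $v=ut^{-1/\alpha}$ in the defining convolution, cancellation of the Jacobian against the density prefactor, and extraction of $t^{(\nu-1)/\alpha}$ from the kernel reproduce the paper's argument line for line. Your supplementary Laplace-transform verification and the explicit checks of the $\nu=0$ and $\alpha=1$ boundary cases (which the paper's proof leaves implicit) are sound but not a different method.
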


\begin{proof}[Proof of Proposition  \ref{prop:RLstable}]
\label{proof:RLstable}
$\{I_+^\nu\, f_\alpha(\cdot\vert t)\}(x)$  takes the explicit form:
\begin{align*} 
\{I_+^\nu\, f_\alpha(\cdot\vert t)\}(x)  &= \ \frac{1}{\Gamma(\nu)} \int^x_0 (x-u)^{\nu-1} f_\alpha(u\vert t)\, du  \\
&=  \frac{1}{\Gamma(\nu)} \int^x_0 (x-u)^{\nu-1} f_\alpha(u t^{-1/\alpha})\, t^{-1/\alpha} \, du  \\
y=ut^{-1/\alpha}:\quad
&=  \frac{1}{\Gamma(\nu)} \int^{xt^{-1/\alpha}}_0 (x-yt^{1/\alpha})^{\nu-1} f_\alpha(y) \, dy  \\
&=  \frac{ t^{(\nu-1)/\alpha}}{\Gamma(\nu)} \int^{xt^{-1/\alpha}}_0 (xt^{-1/\alpha}-y)^{\nu-1} f_\alpha(y) \, dy  
\end{align*}
The last expression is  the explicit form of  $t^{(\nu-1)/\alpha} \{I_+^\nu\, f_\alpha\}(xt^{-1/\alpha})$.
\end{proof}

\begin{definition}[Mixture]
\label{def:RLmixture}
Let  $M^\nu_{\alpha,\mu}(x\vert \lambda)$ $(x\ge0)$ 
be a mixture of fractional integrals $\{I_+^\nu f_\alpha(\cdot\vert t)\}(x)$  of the stable density $f_\alpha(x\vert t)$ $(0<\alpha\le1, t>0, \nu\ge0)$
with respect to a gamma  distribution $G(t \vert \mu,\lambda)$  $(\mu>0, \lambda\ge0)$
over the scale $t$ of the stable density
\begin{align}
M^\nu_{\alpha,\mu}(x\vert \lambda)
&= \int_0^\infty  \{I_+^\nu\, f_\alpha(\cdot\vert t)\}(x) \, dG(t \vert \mu,\lambda)
\label{eq:RLmixture} \\
&\equiv \frac{1}{\Gamma(\mu)}
\int_0^\infty  \{I_+^\nu\, f_\alpha(\cdot\vert t)\}(x) \, t^{\mu-1}\, e^{-\lambda t} \, dt 
\label{eq:RLmixture1}  \\
&= \frac{1}{\Gamma(\mu)}
\int_0^\infty  \{I_+^\nu\, f_\alpha\}(xt^{-1/\alpha})\, t^{\mu+(\nu-1)/\alpha-1}  \, e^{-\lambda t} \, dt 
\label{eq:RLmixture2} 
\end{align}
where the equality of $(\ref{eq:RLmixture1})$ and $(\ref{eq:RLmixture2})$ follows from Proposition~\ref{prop:RLstable}.
Setting $x=1$, it immediately follows that  
$M^\nu_{\alpha,\mu}(1\vert \lambda)$  is  the Laplace transform of a three-parameter distribution $R^\nu_{\alpha,\mu}(t)$, say,
with $\lambda$ as the Laplace transform  variable
\begin{align}
M^\nu_{\alpha,\mu}(1\vert \lambda)
&=  \int_0^\infty  e^{- \lambda t}\, dR^\nu_{\alpha,\mu}(t)  \qquad (\lambda\ge0)
\label{eq:mix3par} \\
 {\rm where}\quad 
dR^\nu_{\alpha,\mu}(t) 
 &=  \frac{1}{\Gamma(\mu)}\, \{I_+^\nu\, f_\alpha(\cdot\vert t)\}(1)\, t^{\mu-1}  \, dt
\label{eq:R3par1} \\
 &= \frac{1}{\Gamma(\mu)}\,  \{I_+^\nu\, f_\alpha\}(t^{-1/\alpha})\, t^{\mu+(\nu-1)/\alpha-1}  \, dt 
\label{eq:R3par2} 
\end{align}
Hence  $M^\nu_{\alpha,\mu}(1\vert \lambda)$  is  completely monotone.
\end{definition}

\begin{proposition}
\label{prop:RLstable1}  
Let $\nu=\beta-\alpha\gamma\ge0$ or $\beta\ge \alpha\gamma$ and $0<\alpha\le1$, $\gamma>0$.
Then Proposition~\ref{prop:RLstable} amounts to 
\begin{align}
t^\gamma \{I_+^{\beta-\alpha\gamma} f_\alpha(\cdot\vert t)\}(x) 
 &= t^{(\beta-1)/\alpha} \{I_+^{\beta-\alpha\gamma} f_\alpha\}(xt^{-1/\alpha}) 
\label{eq:RLstable3par} 
\intertext{The case $\beta-\alpha\gamma=0$ $({\it e.g.}\ \beta=\gamma=0)$ gives}
f_\alpha(x\vert t)  \equiv \{I_+^{\,0} f_\alpha(\cdot\vert t)\}(x) &= f_\alpha(xt^{-1/\alpha})t^{-1/\alpha} 
\label{eq:RLstable3par0}
\intertext{The case $\beta-\alpha\gamma=1-\alpha$ $({\it e.g.}\ \beta=\gamma=1)$ gives} 
x f_\alpha(x\vert t) = \alpha\, t \{I_+^{1-\alpha} f_\alpha(\cdot\vert t)\}(x) &= \alpha \{I_+^{1-\alpha} f_\alpha\}(xt^{-1/\alpha})
\label{eq:RLstable3par1} 
\end{align}
\end{proposition}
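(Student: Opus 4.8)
The plan is to treat the displayed identity~(\ref{eq:RLstable3par}) as a pure reparametrisation of Proposition~\ref{prop:RLstable}, and then to read off the two special cases, the last of which requires a short independent Laplace-transform check.

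First I would substitute $\nu=\beta-\alpha\gamma$ directly into the identity of Proposition~\ref{prop:RLstable}. Since the hypothesis $\beta\ge\alpha\gamma$ guarantees $\nu\ge0$, that proposition applies verbatim and gives
\begin{align*}
\{I_+^{\beta-\alpha\gamma}\, f_\alpha(\cdot\vert t)\}(x) = t^{(\beta-\alpha\gamma-1)/\alpha}\, \{I_+^{\beta-\alpha\gamma}\, f_\alpha\}(xt^{-1/\alpha}).
\end{align*}
The exponent on the right splits as $(\beta-\alpha\gamma-1)/\alpha=(\beta-1)/\alpha-\gamma$, so multiplying through by $t^\gamma$ reproduces~(\ref{eq:RLstable3par}) exactly. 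Nothing beyond this exponent bookkeeping is needed here.

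Next I would extract the two advertised cases. For $\beta-\alpha\gamma=0$ the fractional integral collapses to the identity operator, $\{I_+^{\,0}\, f_\alpha(\cdot\vert t)\}(x)=f_\alpha(x\vert t)$, and setting $\beta=\alpha\gamma$ in~(\ref{eq:RLstable3par}) leaves $t^\gamma f_\alpha(x\vert t)=t^{\gamma-1/\alpha} f_\alpha(xt^{-1/\alpha})$; cancelling $t^\gamma$ yields~(\ref{eq:RLstable3par0}), which is simply the stable scaling relation of Section~\ref{sec:distributions}. For $\beta=\gamma=1$ we have $\beta-\alpha\gamma=1-\alpha$ and $(\beta-1)/\alpha=0$, so~(\ref{eq:RLstable3par}) gives $t\{I_+^{1-\alpha}\, f_\alpha(\cdot\vert t)\}(x)=\{I_+^{1-\alpha}\, f_\alpha\}(xt^{-1/\alpha})$ at once; multiplying by $\alpha$ delivers the second equality of~(\ref{eq:RLstable3par1}).

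The one step that does not follow by substitution is the first equality of~(\ref{eq:RLstable3par1}), namely $x f_\alpha(x\vert t)=\alpha t\{I_+^{1-\alpha}\, f_\alpha(\cdot\vert t)\}(x)$, and this is where I expect the only genuine work. I would verify it on the Laplace side. Using the rule $\scrL\{x f\}(s)=-\tfrac{d}{ds}\widetilde f(s)$ together with the transform $e^{-ts^\alpha}$ of $f_\alpha(\cdot\vert t)$ from~(\ref{eq:stable}), the left-hand side transforms to $-\tfrac{d}{ds}e^{-ts^\alpha}=\alpha t\, s^{\alpha-1}e^{-ts^\alpha}$. The right-hand side, via~(\ref{eq:RLLaplacestableDistribution}) with $\nu=1-\alpha$, transforms to $\alpha t\, s^{-(1-\alpha)}e^{-ts^\alpha}=\alpha t\, s^{\alpha-1}e^{-ts^\alpha}$. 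The two transforms coincide, and uniqueness of the Laplace transform then yields the identity. The only point to watch throughout is that $1-\alpha\ge0$, so that every fractional integral in play is well defined; this holds because $0<\alpha\le1$.
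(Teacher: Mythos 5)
Your proposal is correct and follows essentially the same route as the paper: equations~(\ref{eq:RLstable3par}) and~(\ref{eq:RLstable3par0}) by direct substitution of $\nu=\beta-\alpha\gamma$ into Proposition~\ref{prop:RLstable} together with $\{I_+^{\,0}f\}(x)=f(x)$, and the first equality of~(\ref{eq:RLstable3par1}) by the identical Laplace-transform computation $-\tfrac{d}{ds}e^{-ts^\alpha}=\alpha t\,s^{\alpha-1}e^{-ts^\alpha}$ matched against $s^{-(1-\alpha)}e^{-ts^\alpha}$. Your restriction to $\beta=\gamma=1$ in that last case is harmless, since for any $\beta-\alpha\gamma=1-\alpha$ one has $(\beta-1)/\alpha=\gamma-1$ and the identity reduces to the same statement.
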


\begin{proof}[Proof of Proposition  \ref{prop:RLstable1}]
(\ref{eq:RLstable3par}) immediately follows from (\ref{eq:RLstable}).
In turn, (\ref{eq:RLstable3par0}) follows from (\ref{eq:RLstable3par}) for $\beta-\alpha\gamma=0$, together with  the identity
$\{I_+^{\,0} f\}(x)\equiv f(x)$ for any $f(x)$ as discussed in Section~\ref{sec:fracint}.
For $\beta-\alpha\gamma=1-\alpha$, the Laplace transform of $\{I_+^{1-\alpha} f_\alpha(\cdot\vert t)\}(x) $ is
\begin{align*} 
s^{\alpha-1} e^{-ts^\alpha} &= -\frac{1}{\alpha t}\frac{d}{ds} e^{-ts^\alpha} 
 =  \frac{1}{\alpha t} \int_0^\infty e^{-sx} x f_\alpha(x\vert t)\, dx 
\end{align*}
$ \implies\; x f_\alpha(x\vert t) = \alpha\, t \{I_+^{1-\alpha} f_\alpha(\cdot\vert t)\}(x) = \alpha \{I_+^{1-\alpha} f_\alpha\}(xt^{-1/\alpha})$,
thereby proving  (\ref{eq:RLstable3par1}).
\label{proof:RLstable1}  
\end{proof}

We may introduce yet another parameter $\theta$, say.
For any $\theta$,  $\beta+\theta-\alpha(\gamma+\theta/\alpha) = \beta-\alpha\gamma$.
Hence, if $\beta\to\beta+\theta$ and $\gamma\to\gamma+\theta/\alpha$ individually,  
the  restriction $\gamma>0$   becomes $\gamma+\theta/\alpha>0$ or $\theta>-\alpha\gamma$.
Since $\beta-\alpha\gamma$ remains unchanged,  (\ref{eq:RLstable3par}) becomes
\begin{align}
t^{\gamma+\theta/\alpha} \{I_+^{\beta-\alpha\gamma} f_\alpha(\cdot\vert t)\}(x) 
 = t^{(\beta+\theta-1)/\alpha} \{I_+^{\beta-\alpha\gamma} f_\alpha\}(xt^{-1/\alpha}) 
\label{eq:RLstable4par} 
\end{align}
which is simply (\ref{eq:RLstable3par}), scaled by $t^{\theta/\alpha}$.

For $\mu=\gamma+\theta/\alpha>0$ and $\nu=\beta-\alpha\gamma\ge0$, the mixture 
$M^{\beta-\alpha\gamma}_{\alpha,\gamma+\theta/\alpha}(x\vert \lambda)$ of Definition~\ref{def:RLmixture}
involves  three composite parameters $\{\alpha, \beta-\alpha\gamma, \gamma+\theta/\alpha\}$
constructed from 
four base parameters $\{\alpha,\beta,\gamma, \theta\}$. 
\begin{theorem}
\label{thm:ML4par}
$M^{\beta-\alpha\gamma}_{\alpha,\gamma+\theta/\alpha}(x\vert \lambda) =  \cE^{\gamma+\theta/\alpha}_{\alpha,\beta+\theta}(x\vert \lambda)
\equiv x^{\beta+\theta-1} E^{\gamma+\theta/\alpha}_{\alpha,\beta+\theta}(-\lambda x^\alpha)$ $(x\ge0)$,
 where $M^\nu_{\alpha,\mu}(x\vert \lambda)$ is the mixture of  Definition~\ref{def:RLmixture} with 
 $\mu=\gamma+\theta/\alpha>0, \nu=\beta-\alpha\gamma\ge0$.
Thus $\cE^{\gamma+\theta/\alpha}_{\alpha,\beta+\theta}(x\vert \lambda)$ 
can be expressed as a mixture of fractional integrals $\{I_+^{\beta-\alpha\gamma} f_\alpha(\cdot\vert t)\}(x)$ 
of the stable density $f_\alpha(x\vert t)$ $(t>0)$,
with a gamma mixing distribution $G(t \vert \gamma+\theta/\alpha,\lambda)$ 
for $0<\alpha\le1, \beta\ge\alpha\gamma, \theta>-\alpha\gamma$
\begin{align}
\cE^{\gamma+\theta/\alpha}_{\alpha,\beta+\theta}(x\vert \lambda) 
&\equiv x^{\beta+\theta-1} E^{\gamma+\theta/\alpha}_{\alpha,\beta+\theta}(-\lambda x^\alpha) 
\label{eq:closure}  \\
&= 
\int_0^\infty  \{I_+^{\beta-\alpha\gamma} f_\alpha(\cdot\vert t)\}(x) \, dG(t \vert \gamma+\theta/\alpha,\lambda)
\label{eq:RLvariant} \\
&\equiv \frac{1}{\Gamma(\gamma+\theta/\alpha)}
\int_0^\infty  \{I_+^{\beta-\alpha\gamma} f_\alpha(\cdot\vert t)\}(x) \, t^{\gamma+\theta/\alpha-1}\, e^{-\lambda t} \, dt 
\label{eq:RLMLmixture1}  \\
&= \frac{1}{\Gamma(\gamma+\theta/\alpha)} 
\int_0^\infty  \{I_+^{\beta-\alpha\gamma} f_\alpha\}(xt^{-1/\alpha})\, t^{(\beta+\theta-1)/\alpha-1}  \, e^{-\lambda t} \, dt 
\label{eq:RLMLmixture2} \\
(\beta-\alpha\gamma=0) \quad
&= \frac{1}{\Gamma(\gamma+\theta/\alpha)} \int_0^\infty  f_\alpha(xt^{-1/\alpha}) \, t^{(\beta+\theta-1)/\alpha-1}  \, e^{-\lambda t} \, dt  
\label{eq:RLMLmixture2a} \\
(\beta-\alpha\gamma=1-\alpha) \quad
&= \frac{1}{\Gamma(\gamma+\theta/\alpha)} \, \frac{x}{\alpha} \int_0^\infty  f_\alpha(xt^{-1/\alpha}) \, t^{(\beta+\theta-2)/\alpha-1}  \, e^{-\lambda t} \, dt 
\label{eq:RLMLmixture2b} 
\end{align}
amongst several variants of $(\ref{eq:RLMLmixture2a}), (\ref{eq:RLMLmixture2b})$ 
induced by $\beta-\alpha\gamma=0$, $\beta-\alpha\gamma=1-\alpha$ respectively.
\end{theorem}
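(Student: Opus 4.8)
The plan is to prove the identity by comparing Laplace transforms in the variable $x$, exploiting the fact that the fractional integral is a Laplace convolution and that the gamma integral over the scale $t$ reproduces precisely the Prabhakar denominator $(\lambda+s^\alpha)^{\mu}$. Writing $\nu=\beta-\alpha\gamma$ and $\mu=\gamma+\theta/\alpha$, I would start from the mixture in the form~(\ref{eq:RLmixture1}) and take its Laplace transform with respect to $x$, with transform variable $s$. Because the integrand $\{I_+^\nu\, f_\alpha(\cdot\vert t)\}(x)\,t^{\mu-1}e^{-\lambda t}$ is nonnegative throughout, Tonelli's theorem justifies interchanging the $x$-transform with the $t$-integration, giving
\begin{align*}
\scrL\{M^\nu_{\alpha,\mu}(\cdot\vert\lambda)\}(s)
&= \frac{1}{\Gamma(\mu)}\int_0^\infty \scrL\{I_+^\nu\, f_\alpha(\cdot\vert t)\}(s)\,t^{\mu-1}e^{-\lambda t}\,dt.
\end{align*}

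The first key step is to substitute the transform~(\ref{eq:RLLaplacestableDistribution}), namely $\scrL\{I_+^\nu\, f_\alpha(\cdot\vert t)\}(s)=s^{-\nu}e^{-ts^\alpha}$, which pulls $s^{-\nu}$ outside the integral and leaves a single exponential in $t$. The second key step is the elementary gamma integral
\begin{align*}
\frac{1}{\Gamma(\mu)}\int_0^\infty t^{\mu-1}e^{-(\lambda+s^\alpha)t}\,dt &= (\lambda+s^\alpha)^{-\mu},
\end{align*}
valid for $\mu>0$, $\lambda\ge0$ and $\mathrm{Re}(s)>0$, so that $\mathrm{Re}(\lambda+s^\alpha)>0$. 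Combining these yields $\scrL\{M^\nu_{\alpha,\mu}(\cdot\vert\lambda)\}(s)=s^{-\nu}(\lambda+s^\alpha)^{-\mu}$.

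The third step is to insert the parameter identities $\nu=\beta-\alpha\gamma$ and $\mu=\gamma+\theta/\alpha$ and verify that the resulting expression $s^{\alpha\gamma-\beta}(\lambda+s^\alpha)^{-(\gamma+\theta/\alpha)}$ coincides with the Prabhakar transform~(\ref{eq:LaplaceML3par}) evaluated at the shifted indices $\gamma\to\gamma+\theta/\alpha$ and $\beta\to\beta+\theta$; the crucial point is that the numerator exponent $\alpha(\gamma+\theta/\alpha)-(\beta+\theta)$ has its $\theta$ cancel, collapsing to $\alpha\gamma-\beta$. Since $M^\nu_{\alpha,\mu}(\cdot\vert\lambda)$ and $\cE^{\gamma+\theta/\alpha}_{\alpha,\beta+\theta}(\cdot\vert\lambda)$ then share the same Laplace transform, injectivity of the Laplace transform (Lerch's theorem) gives~(\ref{eq:RLvariant}), and the equivalent forms~(\ref{eq:RLMLmixture1}) and~(\ref{eq:RLMLmixture2}) follow from Proposition~\ref{prop:RLstable}. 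The specialisations~(\ref{eq:RLMLmixture2a}) and~(\ref{eq:RLMLmixture2b}) then follow immediately by substituting~(\ref{eq:RLstable3par0}) and~(\ref{eq:RLstable3par1}) of Proposition~\ref{prop:RLstable1} into~(\ref{eq:RLMLmixture2}).

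I expect the main obstacle to be a matter of rigour rather than computation: carefully justifying the interchange of the $x$-Laplace transform with the $t$-integral (handled by nonnegativity via Tonelli, including the unnormalised $\lambda=0$ case), confirming convergence of the gamma integral on the half-plane $\mathrm{Re}(s)>0$, and invoking uniqueness of the Laplace transform to pass from equality of transforms back to equality of functions. The degenerate case $\nu=\beta-\alpha\gamma=0$, in which $I_+^0$ acts as the identity, must also be checked for consistency with the formula $s^{-\nu}e^{-ts^\alpha}$, which it is since $s^{0}=1$.
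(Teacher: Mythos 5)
Your proposal is correct and follows essentially the same route as the paper's own proof: Laplace-transform the mixture in $x$, interchange with the $t$-integration, evaluate the gamma integral to obtain $s^{\alpha\gamma-\beta}(\lambda+s^\alpha)^{-(\gamma+\theta/\alpha)}$, recognise this via the $\theta$-cancellation as the transform~(\ref{eq:LaplaceML3par}) of $\cE^{\gamma+\theta/\alpha}_{\alpha,\beta+\theta}(x\vert\lambda)$, and derive the remaining forms from Propositions~\ref{prop:RLstable} and~\ref{prop:RLstable1}. Your added care over Tonelli, convergence on ${\rm Re}(s)>0$, Lerch uniqueness, and the degenerate $\nu=0$ case simply makes explicit what the paper leaves implicit.
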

\begin{proof}[Proof of Theorem \ref{thm:ML4par}]
\label{proof:ML4par}
The Laplace transform  of (\ref{eq:RLMLmixture1}) is
\begin{align*}
\frac{s^{\alpha\gamma-\beta}}{\Gamma(\gamma+\theta/\alpha)} \int_0^\infty 
\, t^{\gamma+\theta/\alpha-1} e^{-(\lambda+s^\alpha) t} \,dt  
&= \frac{s^{\alpha\gamma-\beta}}{(\lambda+s^\alpha)^{\gamma+\theta/\alpha}} 
=  \frac{s^{\alpha(\gamma+\theta/\alpha)-(\beta+\theta)}}{(\lambda+s^\alpha)^{\gamma+\theta/\alpha}} 
\end{align*}
By (\ref{eq:LaplaceML3par}), the rightmost expression  is the Laplace transform of 
$\cE^{\gamma+\theta/\alpha}_{\alpha,\beta+\theta}(x\vert \lambda)$.
The equivalence of (\ref{eq:RLMLmixture1}) and (\ref{eq:RLMLmixture2})  follows from~(\ref{eq:RLstable4par}).
Hence the equivalence of $(\ref{eq:closure})$, $(\ref{eq:RLMLmixture1})$ and $(\ref{eq:RLMLmixture2})$.
For $\beta-\alpha\gamma=0$, using $(\ref{eq:RLstable3par0})$ in $(\ref{eq:RLMLmixture2})$ leads to $(\ref{eq:RLMLmixture2a})$.
For $\beta-\alpha\gamma=1-\alpha$, using $(\ref{eq:RLstable3par1})$ in $(\ref{eq:RLMLmixture2})$ leads to $(\ref{eq:RLMLmixture2b})$.
\end{proof}

\begin{corollary}
\label{cor:ML4par}
The Prabhakar function 
$\cE^{\gamma+\theta/\alpha}_{\alpha,\beta+\theta}(1\vert \lambda) \equiv E^{\gamma+\theta/\alpha}_{\alpha,\beta+\theta}(-\lambda)$  
is  completely monotone.
Equivalently,  
 it is the Laplace transform of a four-parameter  distribution $Q^{\gamma}_{\alpha,\beta,\theta}(t)$
\begin{align}
E^{\gamma+\theta/\alpha}_{\alpha,\beta+\theta}(-\lambda)
&= \int_0^\infty  e^{- \lambda t}\, dQ^{\gamma}_{\alpha,\beta,\theta}(t)  \qquad (\lambda\ge0)
\label{eq:ML4parQ}  \\
 {\rm where}\quad 
dQ^{\gamma}_{\alpha,\beta,\theta}(t) 
 &=  \frac{1}{\Gamma(\gamma+\theta/\alpha)}\, \{I_+^{\beta-\alpha\gamma} f_\alpha(\cdot\vert t)\}(1)\, t^{\gamma+\theta/\alpha-1}  \, dt
\label{eq:Q4par1} \\
&= \frac{1}{\Gamma(\gamma+\theta/\alpha)}\,  \{I_+^{\beta-\alpha\gamma} f_\alpha\}(t^{-1/\alpha})\, t^{(\beta+\theta-1)/\alpha-1}  \, dt 
\label{eq:Q4par2}  \\
(\beta-\alpha\gamma=0) \quad
&= \frac{1}{\Gamma(\gamma+\theta/\alpha)} \,  f_\alpha(t^{-1/\alpha})\, t^{(\beta+\theta-1)/\alpha-1}  \, dt 
\label{eq:Q4par2a} \\
(\beta-\alpha\gamma=1-\alpha) \quad
&= \frac{1}{\Gamma(\gamma+\theta/\alpha)} \, \frac{1}{\alpha}\, f_\alpha(t^{-1/\alpha}) \, t^{(\beta+\theta-2)/\alpha-1} \, dt 
\label{eq:Q4par2b} 
\end{align}
amongst several variants of $(\ref{eq:Q4par2a}), (\ref{eq:Q4par2b})$ 
induced by $\beta-\alpha\gamma=0$, $\beta-\alpha\gamma=1-\alpha$ respectively.
\end{corollary}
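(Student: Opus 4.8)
The plan is to obtain the corollary as an immediate specialisation of Theorem~\ref{thm:ML4par} at $x=1$, combined with the completely monotone structure already recorded in Definition~\ref{def:RLmixture}. First I would set $x=1$ in the chain of identities (\ref{eq:closure})--(\ref{eq:RLMLmixture2}). Since $x^{\beta+\theta-1}=1$ and $-\lambda x^\alpha=-\lambda$ at $x=1$, the left-hand side collapses to $E^{\gamma+\theta/\alpha}_{\alpha,\beta+\theta}(-\lambda)$, so that
\begin{equation*}
E^{\gamma+\theta/\alpha}_{\alpha,\beta+\theta}(-\lambda) = M^{\beta-\alpha\gamma}_{\alpha,\gamma+\theta/\alpha}(1\vert\lambda).
\end{equation*}
This identifies the Prabhakar function along the negative real axis with the mixture of Definition~\ref{def:RLmixture} evaluated at unit argument, under the parameter assignment $\mu=\gamma+\theta/\alpha$, $\nu=\beta-\alpha\gamma$.

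Next I would invoke the observation in Definition~\ref{def:RLmixture} (equations (\ref{eq:mix3par})--(\ref{eq:R3par2})) that $M^\nu_{\alpha,\mu}(1\vert\lambda)$ is already exhibited as the Laplace transform, in the variable $\lambda$, of the measure $dR^\nu_{\alpha,\mu}(t)=\tfrac{1}{\Gamma(\mu)}\{I_+^\nu f_\alpha(\cdot\vert t)\}(1)\,t^{\mu-1}\,dt$. Defining $Q^{\gamma}_{\alpha,\beta,\theta}:=R^{\beta-\alpha\gamma}_{\alpha,\gamma+\theta/\alpha}$ then yields (\ref{eq:ML4parQ}) and (\ref{eq:Q4par1}) verbatim, while (\ref{eq:Q4par2}) follows from (\ref{eq:Q4par1}) through the scaling identity of Proposition~\ref{prop:RLstable}, exactly as (\ref{eq:R3par2}) is derived from (\ref{eq:R3par1}). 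The crucial point for complete monotonicity is nonnegativity: for $\nu=\beta-\alpha\gamma\ge0$ the quantity $\{I_+^\nu f_\alpha(\cdot\vert t)\}(1)$ is a Riemann-Liouville integral of the nonnegative density $f_\alpha$, hence nonnegative (as already noted in Section~\ref{sec:main}), and $t^{\mu-1}/\Gamma(\mu)\ge0$ since $\mu=\gamma+\theta/\alpha>0$. Thus $dQ^{\gamma}_{\alpha,\beta,\theta}$ is a nonnegative measure, so $Q^{\gamma}_{\alpha,\beta,\theta}$ is non-decreasing, and Bernstein's theorem in the form (\ref{eq:LT}) delivers the complete monotonicity of $E^{\gamma+\theta/\alpha}_{\alpha,\beta+\theta}(-\lambda)$.

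Finally, the special cases are read off from Proposition~\ref{prop:RLstable1}: substituting (\ref{eq:RLstable3par0}) into (\ref{eq:Q4par2}) for $\beta-\alpha\gamma=0$ gives (\ref{eq:Q4par2a}), and substituting (\ref{eq:RLstable3par1}) for $\beta-\alpha\gamma=1-\alpha$ gives (\ref{eq:Q4par2b}). The step that requires a moment's care is verifying that $Q^{\gamma}_{\alpha,\beta,\theta}$ is a \emph{bounded} distribution, so that the Laplace-Stieltjes representation holds on the closed half-line $\lambda\ge0$ rather than merely for $\lambda>0$. I would confirm this by noting that the total mass equals the value of the transform at $\lambda=0$, namely $E^{\gamma+\theta/\alpha}_{\alpha,\beta+\theta}(0)=1/\Gamma(\beta+\theta)$ from the defining series (\ref{eq:ML3parseries}), which is finite. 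There is otherwise no genuine obstacle: the entire weight of the argument has already been carried by Theorem~\ref{thm:ML4par} and the nonnegativity of the Riemann-Liouville integral, so the corollary is essentially a bookkeeping specialisation.
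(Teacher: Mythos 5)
Your proposal is correct and follows essentially the same route as the paper's own proof: set $x=1$ in Theorem~\ref{thm:ML4par}, read off the Laplace-transform structure and nonnegativity already recorded in Definition~\ref{def:RLmixture} (so that $Q^{\gamma}_{\alpha,\beta,\theta}=R^{\beta-\alpha\gamma}_{\alpha,\gamma+\theta/\alpha}$), and obtain (\ref{eq:Q4par2a}) and (\ref{eq:Q4par2b}) by substituting (\ref{eq:RLstable3par0}) and (\ref{eq:RLstable3par1}) into (\ref{eq:Q4par2}). Your explicit check that the total mass is $E^{\gamma+\theta/\alpha}_{\alpha,\beta+\theta}(0)=1/\Gamma(\beta+\theta)$, hence bounded, only makes explicit a detail the paper defers to Corollary~\ref{cor:ML4parMoments}.
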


\begin{proof}[Proof of Corollary \ref{cor:ML4par}]
\label{proof:corML4par}
(\ref{eq:ML4parQ}) 
follows  from Theorem~\ref{thm:ML4par} by setting $x=1$. 
For $\beta-\alpha\gamma=0$, using $(\ref{eq:RLstable3par0})$ in $(\ref{eq:Q4par2})$ leads to $(\ref{eq:Q4par2a})$.
For $\beta-\alpha\gamma=1-\alpha$, using $(\ref{eq:RLstable3par1})$ in $(\ref{eq:Q4par2})$ leads to $(\ref{eq:Q4par2b})$.
\end{proof}

\begin{corollary}
\label{cor:ML4parMoments}
Setting $\lambda =0$ in $(\ref{eq:ML4parQ})$ and reading 
$E^{\gamma+\theta/\alpha}_{\alpha,\beta+\theta}(0)=1/\Gamma(\beta+\theta)$ from ~$(\ref{eq:ML3parseries})$ 
\begin{align*}
 \int_0^\infty  dQ^{\gamma}_{\alpha,\beta,\theta}(t) 
&= E^{\gamma+\theta/\alpha}_{\alpha,\beta+\theta}(0)  = \frac{1}{\Gamma(\beta+\theta)}
\end{align*}
Hence $P^{\gamma}_{\alpha,\beta,\theta}(t) \equiv \Gamma(\beta+\theta) Q^{\gamma}_{\alpha,\beta,\theta}(t)$
is a  probability distribution  and its  Laplace transform is
$\Gamma(\beta+\theta)E^{\gamma+\theta/\alpha}_{\alpha,\beta+\theta}(-\lambda)$.
For  $n\ge0$, the moments  of $P^{\gamma}_{\alpha,\beta,\theta}(t)$ are
\begin{align}
\int_0^\infty  t^n \, dP^{\gamma}_{\alpha,\beta,\theta}(t)
&= \frac{\Gamma(\beta+\theta)\,\Gamma(\gamma+n+\theta/\alpha)}{\Gamma(\gamma+\theta/\alpha)\,\Gamma(\beta+\alpha n+\theta)}
\label{eq:ML4parMoments} 
\intertext{More generally, for $\lambda\ge0$ and $q>-\gamma-\theta/\alpha$} 
 \int_0^\infty  e^{- \lambda t}\, t^q \, dP^{\gamma}_{\alpha,\beta,\theta}(t) 
&= \frac{\Gamma(\beta+\theta)\,\Gamma(\gamma+q+\theta/\alpha)}{\Gamma(\gamma+\theta/\alpha)}\, 
E^{\gamma+q+\theta/\alpha}_{\alpha,\beta+\alpha q+\theta}(-\lambda)  \ge 0
\label{eq:ML4parCM} 
\end{align}
\end{corollary}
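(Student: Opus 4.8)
The plan is to deduce the entire corollary from the single Laplace-transform identity (\ref{eq:ML4parQ}) of Corollary~\ref{cor:ML4par}, exploiting the fact that the base parameter $\theta$ enters the construction only through the composites $\gamma+\theta/\alpha$ and $\beta+\theta$, whereas the order of fractional integration $\nu=\beta-\alpha\gamma$ is independent of $\theta$. This decoupling is what makes the fractional $q$-th ``moment'' of $Q^{\gamma}_{\alpha,\beta,\theta}$ collapse back onto another member of the same family, and it is the observation on which the whole computation turns.

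First I would settle the normalisation. Setting $\lambda=0$ in (\ref{eq:ML4parQ}) gives $\int_0^\infty dQ^{\gamma}_{\alpha,\beta,\theta}(t)=E^{\gamma+\theta/\alpha}_{\alpha,\beta+\theta}(0)$, and retaining only the $k=0$ term of the series (\ref{eq:ML3parseries}) yields $E^{\gamma+\theta/\alpha}_{\alpha,\beta+\theta}(0)=1/\Gamma(\beta+\theta)$. Hence $Q^{\gamma}_{\alpha,\beta,\theta}$ has total mass $1/\Gamma(\beta+\theta)$, so $P^{\gamma}_{\alpha,\beta,\theta}\equiv\Gamma(\beta+\theta)\,Q^{\gamma}_{\alpha,\beta,\theta}$ is a bona fide probability distribution, with Laplace transform $\Gamma(\beta+\theta)\,E^{\gamma+\theta/\alpha}_{\alpha,\beta+\theta}(-\lambda)$.

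The heart of the matter is (\ref{eq:ML4parCM}), since the moment formula (\ref{eq:ML4parMoments}) is recovered from it by putting $\lambda=0,\ q=n$ and reading $E^{\gamma+n+\theta/\alpha}_{\alpha,\beta+\alpha n+\theta}(0)=1/\Gamma(\beta+\alpha n+\theta)$ once more. Here I would use a \emph{tilting} identity: multiplying the density (\ref{eq:Q4par1}) by $t^q$ raises the gamma weight $t^{\gamma+\theta/\alpha-1}$ to $t^{\gamma+q+\theta/\alpha-1}$ but leaves the fractional-integral factor $\{I_+^{\beta-\alpha\gamma}f_\alpha(\cdot\vert t)\}(1)$ untouched, because $\nu=\beta-\alpha\gamma$ does not involve $\theta$. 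Consequently the tilted density is, up to a ratio of the two normalising Gammas, exactly $dQ^{\gamma}_{\alpha,\beta,\theta'}$ for the shifted value $\theta'=\theta+\alpha q$ (for which $\gamma+\theta'/\alpha=\gamma+q+\theta/\alpha$ and $\beta+\theta'=\beta+\alpha q+\theta$):
\begin{align*}
t^q\,dQ^{\gamma}_{\alpha,\beta,\theta}(t)=\frac{\Gamma(\gamma+q+\theta/\alpha)}{\Gamma(\gamma+\theta/\alpha)}\,dQ^{\gamma}_{\alpha,\beta,\theta'}(t).
\end{align*}
Integrating against $e^{-\lambda t}$ and applying (\ref{eq:ML4parQ}) with $\theta'$ in place of $\theta$ then reproduces the right-hand side of (\ref{eq:ML4parCM}), once the factor $\Gamma(\beta+\theta)$ converting $Q$ to $P$ is restored. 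The stated nonnegativity is immediate, $P^{\gamma}_{\alpha,\beta,\theta}$ being a nonnegative measure and $e^{-\lambda t}t^q\ge0$ on $t>0$.

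I expect the only genuinely delicate point -- and hence the main obstacle -- to be convergence of these integrals, which is what fixes the admissible range of $q$. Near $t=0$ the factor $\{I_+^{\beta-\alpha\gamma}f_\alpha(\cdot\vert t)\}(1)$ stays bounded: by Proposition~\ref{prop:RLstable} it equals $t^{(\beta-\alpha\gamma-1)/\alpha}\{I_+^{\beta-\alpha\gamma}f_\alpha\}(t^{-1/\alpha})$, whose large-argument asymptotics (from the small-$s$ behaviour $s^{-(\beta-\alpha\gamma)}e^{-ts^\alpha}\to s^{-(\beta-\alpha\gamma)}$ of its Laplace transform (\ref{eq:RLLaplacestableDistribution})) yield a finite limit as $t\to0$. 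The integrand therefore behaves like $t^{\gamma+q+\theta/\alpha-1}$ and is integrable precisely when $q>-\gamma-\theta/\alpha$; this is exactly the condition under which $\theta'=\theta+\alpha q$ still obeys the hypothesis $\theta'>-\alpha\gamma$ of Theorem~\ref{thm:ML4par}, so the shifted application is legitimate. Convergence at $t=\infty$ is automatic even for $\lambda=0$, because the super-exponential vanishing of the stable density at the origin forces $dQ^{\gamma}_{\alpha,\beta,\theta}$ to decay faster than any power of $t$, so that all the moments in (\ref{eq:ML4parMoments}) are finite.
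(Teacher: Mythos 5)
Your proof is correct and takes essentially the same route as the paper: the tilting identity $t^q\,dQ^{\gamma}_{\alpha,\beta,\theta}(t)=\frac{\Gamma(\gamma+q+\theta/\alpha)}{\Gamma(\gamma+\theta/\alpha)}\,dQ^{\gamma}_{\alpha,\beta,\theta+\alpha q}(t)$, resting on the observation that the fractional-integral order $\beta-\alpha\gamma$ is $\theta$-free, is exactly the paper's mechanism (stated there at the level of $P^{\gamma}_{\alpha,\beta,\theta}$, with the moment formula done first and the general $(\lambda,q)$ case second, whereas you specialise $\lambda=0$, $q=n$ afterwards --- a purely presentational difference). Your explicit convergence discussion, matching $q>-\gamma-\theta/\alpha$ to the shifted constraint $\theta+\alpha q>-\alpha\gamma$, is a sound justification of a point the paper leaves implicit.
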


\begin{proof}[Proof of Corollary \ref{cor:ML4parMoments}]
\label{proof:ML4parMoments}
By (\ref{eq:Q4par1})   together with $P^{\gamma}_{\alpha,\beta,\theta}(t) \equiv \Gamma(\beta+\theta) Q^{\gamma}_{\alpha,\beta,\theta}(t)$
\begin{align*}
 t^n\, dP^{\gamma}_{\alpha,\beta,\theta}(t)  
 &=  \frac{\Gamma(\beta+\theta)}{\Gamma(\gamma+\theta/\alpha)}\, 
 \{I_+^{\beta-\alpha\gamma} f_\alpha(\cdot\vert t)\}(1)\, t^{\gamma+(\alpha n+\theta)/\alpha-1}  \, dt \\
 &= \frac{\Gamma(\beta+\theta)\Gamma(\gamma+n+\theta/\alpha)}{\Gamma(\gamma+\theta/\alpha)\Gamma(\beta+\alpha n+\theta)}\, 
 dP^{\gamma}_{\alpha,\beta,\alpha n+\theta}(t) \\
\implies
\int_0^\infty  t^n \, dP^{\gamma}_{\alpha,\beta,\theta}(t)
&= \frac{\Gamma(\beta+\theta)\,\Gamma(\gamma+n+\theta/\alpha)}{\Gamma(\gamma+\theta/\alpha)\,\Gamma(\beta+\alpha n+\theta)}
\end{align*}
since  $P^{\gamma}_{\alpha,\beta,\alpha n+\theta}(t)$ is normalised.
For the more general case with $\lambda\ge0$ and $q>-\gamma-\theta/\alpha$
\begin{align*}
 \int_0^\infty  e^{- \lambda t}\, t^q \, dP^{\gamma}_{\alpha,\beta,\theta}(t) 
 &= \frac{\Gamma(\beta+\theta)\Gamma(\gamma+q+\theta/\alpha)}{\Gamma(\gamma+\theta/\alpha)\Gamma(\beta+\alpha q+\theta)}\, 
 \int_0^\infty  e^{- \lambda t}\, dP^{\gamma}_{\alpha,\beta,\alpha q+\theta}(t) \\
&= \frac{\Gamma(\beta+\theta)\,\Gamma(\gamma+q+\theta/\alpha)}{\Gamma(\gamma+\theta/\alpha)}\, 
E^{\gamma+q+\theta/\alpha}_{\alpha,\beta+\alpha q+\theta}(-\lambda)  \ge 0 
\end{align*}
since  the Laplace  transform of $P^{\gamma}_{\alpha,\beta,\alpha q+\theta}(t)$ is 
$\Gamma(\beta+\alpha q+\theta)E^{\gamma+q+\theta/\alpha}_{\alpha,\beta+\alpha q+\theta}(-\lambda)$
\end{proof}


Our argument has solely appealed to probability  theory and fractional integration, 
with the Riemann-Liouville fractional integral of the conditional stable density as the basic construct.
To borrow a term from number theory, we might describe the argument as ``elementary"   in the sense that it has not made 
explicit reference to complex analysis.

The special cases of Theorem~\ref{thm:ML4par} studied in the literature follow two distinct  approaches:

\paragraph{Complex Analytic:} 
The starting point in this context  is the work by Pollard~\cite{PollardML} on the completely monotone character of the one-parameter 
Mittag-Leffler function $E_\alpha(-x)$,
deriving a distribution  $P_\alpha(t)$ whose Laplace transform is $E_\alpha(-x)$.
G\'{o}rska {\it et al.}~\cite{Gorska} generalised Pollard's complex analytic approach to prove  the 
completely monotone character of the Prabhakar  function $E^\gamma_{\alpha,\beta}(-x)$.
There is  well-established  work  (not discussed in this paper) that also appeals to complex analytic methods to prove 
the completely monotone character of  $\cE^\gamma_{\alpha,\beta}(x\vert \lambda)$,
{\it i.e.}\ that there exists  a distribution  whose Laplace  transform is $\cE^\gamma_{\alpha,\beta}(x\vert \lambda)$
(deOliveira {\it et al.}~\cite{Oliveira}, Mainardi and Garrappa~\cite{MainardiGarrappa}, Tomovski {\it et al.}~\cite{Tomovski}).
In a nutshell, the point of departure of this literature is complex analysis and, in the parametric  framework  of this paper, 
it is characterised by $\theta=0$.

\paragraph{Probabilistic:}
In the probability literature, 
$P_\alpha(t)$ is known as the Mittag-Leffler distribution
(one of two distributions bearing the  name).  
The generalisation to  two parameters ($\alpha,\theta$), referred to as the generalised  Mittag-Leffler distribution,
is the case $P_{\alpha,\theta}(t)\equiv P^{\gamma=1}_{\alpha,\beta=1,\theta}(t)$ of this paper.

We proceed to sketch  both complex  analytic and probabilistic  approaches in the literature.
A fundamental  objective of this paper is a unifying probabilistic framework for 
these disjoint bodies of literature.

\section{The Case $\theta=0$}

\subsection{$\{\beta=\gamma=1\}$}
\label{sec:1par}
$\cE_\alpha(x\vert \lambda)\equiv \cE^{1}_{\alpha,1,0}(x\vert \lambda) = 
 E^{1}_{\alpha,1}(-\lambda x^\alpha) \equiv E_\alpha(-\lambda x^\alpha)$.
Hence (\ref{eq:RLMLmixture2b}) of Theorem~\ref{thm:ML4par} becomes
\begin{align}
\cE_\alpha(x\vert \lambda) =  E_\alpha(-\lambda x^\alpha) 
&=  \frac{x}{\alpha} \int_0^\infty  f_\alpha(x\vert t) \, t^{-1}  \, e^{-\lambda t} \, dt  \nonumber \\
&=  \frac{x}{\alpha} \int_0^\infty  f_\alpha(xt^{-1/\alpha}) \, t^{-1/\alpha-1}  \, e^{-\lambda t} \, dt 
\label{eq:MLmixture} 
\end{align}

Setting $x=1$, the  Mittag-Leffler function   $E_\alpha(-\lambda)$ is completely monotone by virtue of being 
the Laplace transform of the  distribution  $P_\alpha(t) \equiv P^{1}_{\alpha,1,0}(t)$, {\it i.e.}\
(\ref{eq:MLmixture}) becomes
\begin{align}
\cE_\alpha(1\vert \lambda)  = E_\alpha(-\lambda)
&=  \int_0^\infty  e^{- \lambda t}\, dP_\alpha(t)  \qquad (\lambda\ge0)
\label{eq:ML1par} \\
 {\rm where}\quad 
dP_\alpha(t) 
 &=   \frac{1}{\alpha} \,  f_\alpha(1\vert t) \, t^{-1} \, dt 
   =   \frac{1}{\alpha} \,  f_\alpha(t^{-1/\alpha}) \, t^{-1/\alpha-1} \, dt 
 \label{eq:P1par} 
 \end{align}
$P_\alpha(t)$  may be written in terms of the stable distribution $F_\alpha$: 
\begin{align}
P_\alpha(t) &= \ \frac{1}{\alpha} \int_0^t  f_\alpha(u^{-1/\alpha})\, u^{-1/\alpha-1} \, du 
= \int_{t^{-1/\alpha}}^\infty  f_\alpha(y)\, dy 
\nonumber \\
&=  1- \int_0^{t^{-1/\alpha}} f_\alpha(y)\, dy   
  \equiv 1-  F_\alpha(t^{-1/\alpha}) = 1-  F_\alpha(1\vert t)
\label{eq:PollardDistribution}
\end{align}

Adopting a  complex analytic approach, Pollard used  the contour integral representation:
\begin{align}
 E_\alpha(-x) &= \frac{1}{2\pi i}\oint_{C} \frac{s^{\alpha-1}e^s}{x+s^{\alpha}}\,ds  
                       = \frac{1}{2\pi i\alpha}\oint_{C^\prime} \frac{e^{z^{\frac{1}{\alpha}}}}{x+z}\,dz
\label{eq:MLcontour}
\end{align}
to prove that the Mittag-Leffler function $E_\alpha(-x)$ is the Laplace  transform   of $P_\alpha(t)$ given by~(\ref{eq:P1par}).
We  may thus refer to $P_\alpha(t)$ as the  Pollard distribution.

 Interestingly, Pollard~\cite{PollardML}  led with an  opening remark about a probabilistic  approach due to Feller before turning to his own 
 complex analytic approach:
\begin{quote}
``W.~Feller communicated to me his discovery -- by the methods of probability theory -- that if $0\le \alpha \le1$ 
the function $E_\alpha(-x)$ is completely monotonic for $x\ge0$. 
This means that it can be written in the form
\begin{align*}
E_\alpha(-x) &= \int_{0}^\infty e^{-xt} dP_\alpha(t) 
\end{align*}
where $P_\alpha(t)$ is nondecreasing and bounded.
In this note we shall prove this fact directly  and determine the function $P_\alpha(t)$ explicitly.'' \newline
 [we use $P_\alpha$ where Pollard used $F_\alpha$, having  used the latter  for the stable distribution]
\end{quote}

In his  method of probability theory, Feller~\cite{Feller2}~(XIII.8) considered   the two-dimensional Laplace transform of
 $1- F_\alpha(x\vert t)  \equiv 1- F_\alpha(xt^{-1/\alpha})$ to prove the  result. 
This paper is in the spirit of Feller's probabilistic reasoning  rather than Pollard's complex analysis.

\subsection{$\{\beta\ge\alpha\gamma,\gamma>0\}$}
\label{sec:3par}
$\cE^{\gamma}_{\alpha,\beta}(x\vert \lambda) \equiv \cE^{\gamma}_{\alpha,\beta,0}(x\vert \lambda)  
= x^{\beta-1} E^{\gamma}_{\alpha,\beta}(-\lambda x^\alpha)$.
Hence (\ref{eq:RLMLmixture2})  of Theorem~\ref{thm:ML4par} becomes
\begin{align}
\cE^{\gamma}_{\alpha,\beta}(x\vert \lambda) 
&\equiv x^{\beta-1} E^{\gamma}_{\alpha,\beta}(-\lambda x^\alpha)   \nonumber \\
&= \frac{1}{\Gamma(\gamma)}
\int_0^\infty  \{I_+^{\beta-\alpha\gamma} f_\alpha(\cdot\vert t)\}(x) \, t^{\gamma-1}\, e^{-\lambda t} \, dt 
\label{eq:RLMLmix3par1}  \\
&= \frac{1}{\Gamma(\gamma)} 
\int_0^\infty  \{I_+^{\beta-\alpha\gamma} f_\alpha\}(xt^{-1/\alpha})\, t^{(\beta-1)/\alpha-1}  \, e^{-\lambda t} \, dt 
\label{eq:RLMLmix3par2} 
\end{align}

By Corollary~\ref{cor:ML4par}, the Prabhakar function $\cE^{\gamma}_{\alpha,\beta}(1\vert \lambda) = E^{\gamma}_{\alpha,\beta}(-\lambda)$
is completely  monotone by virtue of being the Laplace transform of 
$Q^{\gamma}_{\alpha,\beta}(t) \equiv Q^{\gamma}_{\alpha,\beta,\theta=0}(t)$.  
Equivalently,  $\Gamma(\beta)\cE^{\gamma}_{\alpha,\beta}(1\vert \lambda) = \Gamma(\beta)E^{\gamma}_{\alpha,\beta}(-\lambda)$ 
is the Laplace transform of the probability distribution $P^{\gamma}_{\alpha,\beta}(t)$ given by 
\begin{align}
dP^{\gamma}_{\alpha,\beta}(t) 
 &=  \frac{\Gamma(\beta)}{\Gamma(\gamma)}\, \{I_+^{\beta-\alpha\gamma} f_\alpha(\cdot\vert t)\}(1)\, t^{\gamma-1}  \, dt \\
 &= \frac{\Gamma(\beta)}{\Gamma(\gamma)}\,  \{I_+^{\beta-\alpha\gamma} f_\alpha\}(t^{-1/\alpha})\, t^{(\beta-1)/\alpha-1}  \, dt  \\
\int_0^\infty  t^n \, dP^{\gamma}_{\alpha,\beta}(t)
&= \frac{\Gamma(\beta)\,\Gamma(\gamma+n)}{\Gamma(\gamma)\,\Gamma(\beta+\alpha n)} 
\end{align}
The  representation (\ref{eq:RLMLmix3par2})  for $x=1$ is technically equivalent to 
equation (2.4) in G\'{o}rska {\it et al.}~\cite{Gorska}.
Inspired by  Pollard~\cite{PollardML}, G\'{o}rska {\it et al.}~\cite{Gorska} took Laplace  inversion as the point of departure:
\begin{align}
E^\gamma_{\alpha,\beta}(-\lambda x^\alpha) 
&= \frac{x^{1-\beta}}{2\pi i}\oint_{C} e^{s x\, }\frac{s^{\alpha\gamma-\beta}}{(\lambda+s^{\alpha})^\gamma}\,ds  
\label{eq:MLcontour}
\end{align}

The balance of G\'{o}rska {\it et al.}~\cite{Gorska}  is devoted to  finding an explicit formula for a function $f^\gamma_{\alpha,\beta}(x)$ featuring in
the paper in terms of the Meijer $G$ function and associated confluent Wright function.
Equation (22) in Tomovski {\it et al.}~\cite{Tomovski}, similarly  derived through contour integration, also involves the Wright function.
As defined by equation~(2.6)   in G\'{o}rska {\it et al.}~\cite{Gorska}, $f^\gamma_{\alpha,\beta}(x)$  is identical to 
\begin{align} 
\{I_+^{\beta-\alpha\gamma} f_\alpha\}(x) 
   &= \frac{1}{\Gamma(\beta-\alpha\gamma)}  \int_0^x  (x-u)^{\beta-\alpha\gamma-1}  f_\alpha(u)\, du 
 \label{eq:rhostarstablef}
\end{align}
We are content to leave  this in the  simple fractional integral form rather than express it in terms of  special functions.
In  any event, we have  actually worked with the conditional   density 
\begin{align*}
t^\gamma \{I_+^{\beta-\alpha\gamma} f_\alpha(\cdot\vert t)\}(x) 
 &= t^{(\beta-1)/\alpha} \{I_+^{\beta-\alpha\gamma} f_\alpha\}(xt^{-1/\alpha}) 
\end{align*}
where we assigned a gamma prior distribution  to the scale parameter $t$.
This conditional density  reduces to (\ref{eq:rhostarstablef}) only for the particular choice $t=1$.

\section{The  Case $\theta\ne0$}
\label{sec:2par}
The instances studied in the probability literature amount to the choice 
$\beta=\gamma=1$ (a particular case  of $\beta-\alpha\gamma=1-\alpha$)
and, rather indirectly, $\beta=\gamma=0$ (a particular case  of $\beta-\alpha\gamma=0$).
Starting with $\beta=\gamma=1$, (\ref{eq:RLMLmixture2b}) in Theorem~\ref{thm:ML4par} gives
\begin{align}
\cE^{1+\theta/\alpha}_{\alpha,1+\theta}(x\vert \lambda) 
&= x^{\theta} E^{1+\theta/\alpha}_{\alpha,1+\theta}(-\lambda x^\alpha) 
\nonumber \\
&= \frac{1}{\Gamma(1+\theta/\alpha)} \frac{x}{\alpha}
\int_0^\infty  f_\alpha(x\vert t) \, t^{\theta/\alpha-1} \, e^{-\lambda t} \, dt 
\label{eq:RLMLmix2par1}  \\
&= \frac{1}{\Gamma(1+\theta/\alpha)} \frac{x}{\alpha}
\int_0^\infty  f_\alpha(x t^{-1/\alpha}) \, t^{(\theta-1)/\alpha-1} \, e^{-\lambda t} \, dt 
\label{eq:RLMLmix2par2} 
\end{align}
$\cE^{1+\theta/\alpha}_{\alpha,1+\theta}(1\vert \lambda) = E^{1+\theta/\alpha}_{\alpha,1+\theta}(-\lambda)$ 
is completely monotone by virtue of being 
the Laplace transform of a two-parameter distribution  $Q_{\alpha,\theta}(t) \equiv Q^{1}_{\alpha,1,\theta}(t)$.
Equivalently,  
$\Gamma(1+\theta)E^{1+\theta/\alpha}_{\alpha,1+\theta}(-\lambda)$ 
is the Laplace transform of the probability distribution $P_{\alpha,\theta}(t)\equiv P^{1}_{\alpha,1,\theta}(t)$ 
\begin{align}
\Gamma(1+\theta) E^{1+\theta/\alpha}_{\alpha,1+\theta}(-\lambda)
&=  \int_0^\infty  e^{- \lambda t}\, dP_{\alpha,\theta}(t)  \quad (\lambda\ge0)
\label{eq:ML2par} \\
 {\rm where}\quad 
dP_{\alpha,\theta}(t) 
 &=  \frac{\Gamma(1+\theta)}{\Gamma(1+\theta/\alpha)}\,  \frac{1}{\alpha} \,   f_\alpha(t^{-1/\alpha}) \, t^{(\theta-1)/\alpha-1}  \, dt 
 \label{eq:P2par2}   \\
 &=  \frac{\Gamma(1+\theta) }{\Gamma(1+\theta/\alpha)}\,   t^{\theta/\alpha} \, dP_\alpha(t) \quad \theta>-\alpha
\label{eq:P2par3} \\
 {\rm and}\quad 
\int_0^\infty  t^n \, dP_{\alpha,\theta}(t)
&= \frac{\Gamma(1+\theta)\,\Gamma(1+n+\theta/\alpha)}{\Gamma(1+\theta/\alpha)\,\Gamma(1+\alpha n+\theta)} 
\label{eq:P2moments}
\intertext{Since $\Gamma(1+\theta)=\theta\Gamma(\theta)$ and  $\Gamma(1+\theta/\alpha) = (\theta/\alpha)\Gamma(\theta/\alpha)$ {\it etc.},  
(\ref{eq:P2par2}) and (\ref{eq:P2moments}) may be written as} 
dP_{\alpha,\theta}(t) 
 &=  \frac{\Gamma(\theta)}{\Gamma(\theta/\alpha)} \,   f_\alpha(t^{-1/\alpha}) \, t^{(\theta-1)/\alpha-1}  \, dt 
 \label{eq:P2par4}  \\
 \int_0^\infty  t^n \, dP_{\alpha,\theta}(t)
&= \frac{\Gamma(\theta)\,\Gamma(n+\theta/\alpha)}{\Gamma(\theta/\alpha)\,\Gamma(\alpha n+\theta)} 
\label{eq:P2moments1}
\end{align}
but this obscures the original range $\theta>-\alpha$, suggesting instead that $\theta>0$.
In fact, (\ref{eq:P2par4}) coincides with an alternative definition $P_{\alpha,\theta}(t)\equiv P^{0}_{\alpha,0,\theta}(t)$, 
given by~(\ref{eq:Q4par2a}) (times $\Gamma(\theta)$) for $\beta=\gamma=0$,
with  Laplace transform  $\Gamma(\theta)E^{\theta/\alpha}_{\alpha,\theta}(-\lambda)$. 
We  regard $P^{1}_{\alpha,1,\theta}(t)$ and $P^{0}_{\alpha,0,\theta}(t)$  as logically distinct  special cases of 
$P^{\gamma}_{\alpha,\beta,\theta}(t)$, both compatible  with $\theta>-\alpha\gamma$.
Henceforth we take $P_{\alpha,\theta}(t)$ to be  $P^{1}_{\alpha,1,\theta}(t)$.

The Pollard distribution $P_\alpha(t)$ is known as  the Mittag-Leffler distribution in the probability literature.
 It is one of  two distributions bearing the  name: $P_\alpha(t)= 1- F_\alpha(1\vert t)=1- F_\alpha(t^{-1/\alpha})$ and $1-E_\alpha(-t^\alpha)$ 
 (the latter was so named by Pillai~\cite{Pillai}).
$P_{\alpha,\theta}(t)$  is known in the same literature 
as the generalised Mittag-Leffler distribution (Pitman~\cite{Pitman_CSP}, p70 (3.27)).
It is also denoted by ${\rm ML(\alpha,\theta})$ 
(Goldschmidt and Haas~\cite{GoldschmidtHaas}, Ho {\it  et al.}~\cite{HoJamesLau}). 
The derivation in the probability  literature takes the form of  a limiting distribution of  a  P{\' o}lya urn scheme 
(Janson~\cite{Janson}).
As concisely  described  in~\cite{GoldschmidtHaas}, 
the same limiting distribution arises from the Chinese restaurant process, which is 
closely  related to the Poisson-Dirichlet process ${\rm PD(\alpha,\theta})$
of Pitman and Yor~\cite{PitmanYor}.

\section{Infinitely Divisible Distributions}
\label{sec:ID}

There is  an   intimate relationship between completely monotone functions and the theory of 
infinitely divisible  distributions on the nonnegative half-line $\mathbb{R}_{+}=[0,\infty)$ 
(Feller~\cite{Feller2} (XIII.4,~XIII.7), Steutel and van Harn~\cite{SteutelvanHarn} (III)).
Sato~\cite{Sato}  considers infinitely divisible distributions on $\mathbb{R}^d$,
but the deliberate restriction to $\mathbb{R}_{+}$ makes for simpler discussion 
and relates directly to the core concept of complete monotonicity that is of interest here.
There is also an intimate  link to the generalised gamma convolutions studied by Bondesson~\cite{Bondesson}.

A distribution on $\mathbb{R}_{+}$ with density $f(x\vert t)$  ($t>0$) is infinitely divisible iff its Laplace transform takes the form
$\widetilde f(s\vert t)=e^{-t\psi(s)}$ ($s>0$) where $\psi(s)$  is a positive function and its derivative $\psi\,^{\prime}(s)$ is completely monotone.
Hence there exists  a density $\rho(x)$ with Laplace transform $\widetilde \rho(s)= \psi\,^{\prime}(s)$:
\begin{align}
 -\frac{\widetilde f\,^\prime(s\vert t)} {\widetilde f(s\vert t)} &=  t\, \psi\,^{\prime}(s) =  t \, \widetilde \rho(s)  \nonumber \\
\text{or}\;  -\widetilde f\,^\prime(s\vert t)  &= t \, \widetilde \rho(s)\times \widetilde f(s\vert t) \
 \label{eq:IDLaplace} \\
\implies x f(x\vert t)  &= t \, \{\rho\star f(\cdot\vert t)\}(x)
\label{eq:IDConv}
\end{align}
These are equivalent ways to characterise infinitely divisible distributions.

For the stable   density $f_\alpha(x\vert t)$ ($0<\alpha< 1$), $\psi_\alpha(s)=s^\alpha$ and
$\psi_\alpha\,^{\prime}(s)=\rho_\alpha(s)=\alpha s^{\alpha-1}$ $\implies$ $\rho_\alpha(x) = \alpha\,x^{-\alpha}/\Gamma(1-\alpha)$, so that 
$\{\rho_\alpha\star f_\alpha(\cdot\vert t)\}(x) \equiv \alpha\, \{I_+^{1-\alpha} f_\alpha(\cdot\vert t)\}(x)$.
Hence, for the stable case, (\ref{eq:IDConv}) reproduces~(\ref{eq:RLstable3par1})  from the perspective of  infinite divisibility.
Put differently, the stable  distribution may be looked upon as a bridge between the theory of infinitely divisible distributions and 
Riemann-Liouville fractional integration.
(\ref{eq:IDConv}) holds for all infinitely divisible densities, however the convolution $\{\rho\star f(\cdot\vert t)\}(x)$ need not be expressible as
a fractional integral.

\section{Discussion}
\label{sec:discussion}

It is worth reviewing   the mathematical concepts and theorem(s) that arise in this paper:
\begin{enumerate}
\item $F_\alpha(x\vert t)$, the one-sided stable  distribution  with density $f_\alpha(x\vert t)$.
\item $\{I_+^\nu\, f_\alpha(\cdot\vert t)\}(x)$, the Riemann-Liouville fractional integral of $f_\alpha(x\vert t)$.
\item $G(t\vert \mu,\lambda)$, the gamma distribution (with shape and scale  parameters $\mu,\lambda$) assigned to 
the scale parameter $t$ of the stable distribution.
\item $M^\nu_{\alpha,\mu}(x\vert \lambda)$, the mixture of $\{I_+^\nu\, f_\alpha(\cdot\vert t)\}(x)$ with respect to $G(t\vert \mu,\lambda)$.
\item Theorem~\ref{thm:ML4par} states that 
$M^{\beta-\alpha\gamma}_{\alpha,\gamma+\theta/\alpha}(x\vert \lambda) =  \cE^{\gamma+\theta/\alpha}_{\alpha,\beta+\theta}(x\vert \lambda)
\equiv x^{\beta+\theta-1} E^{\gamma+\theta/\alpha}_{\alpha,\beta+\theta}(-\lambda x^\alpha)$.
\item $\Gamma(\beta+\theta)\, \cE^{\gamma+\theta/\alpha}_{\alpha,\beta+\theta}(1\vert \lambda) \equiv 
\Gamma(\beta+\theta)\,E^{\gamma+\theta/\alpha}_{\alpha,\beta+\theta}(-\lambda)$
is the Laplace transform  of a probability distribution $P^{\gamma}_{\alpha,\beta,\theta}(t)$
that we might refer to as the four-parameter Pollard distribution.
\end{enumerate}
This combination of stable distribution, fractional   integrals and gamma distribution  is a novel probabilistic  approach to the  study of the 
Prabhakar function and its completely monotone  character.
Unlike approaches inspired by Pollard's derivation of $P_\alpha(t)$, it does not invoke complex analysis as a starting point.
Equally, the approach of this paper takes a different probabilistic  route from that of  random experiment metaphors 
(P\'{o}lya urns, Chinese restaurant process). 
It also does not invoke    other concepts  associated with the construction  of the generalised Mittag-Leffler distribution 
$P_{\alpha,\theta}(t)\equiv P^{1}_{\alpha,1,\theta}(t)$,
 such as polynomial tilting of the stable density $f_\alpha(x)\to f_{\alpha,\theta}(x)\propto x^{-\theta}f_\alpha(x)$
 (Arbel {\it et al.}~\cite{Arbel}, Devroye~\cite{Devroye}, James~\cite{James_Lamperti}).

We hasten to add that the contrasting of different  approaches is not meant to be a comment on  their relative merit.
Diversity  of approach is commonplace in  probability theory. 
For example, in a  context of nonparametric  Bayesian analysis, 
 Ferguson~\cite{Ferguson1} constructed the Dirichlet process based on 
the  gamma distribution as the fundamental probabilistic concept, 
without invoking the metaphor of a random experiment.
In a nutshell, the Dirichlet distribution arises from  normalising gamma-distributed   variables over  disjoint 
``cells" of a  partition or tiling of  some domain such as  time or space
(formally, both the variable and the gamma shape parameter are measures over the domain and thus additive under merging of disjoint sub-domains).
By Condition~C in Ferguson~\cite{Ferguson1}, the Dirichlet process is  the set of mutually consistent Dirichlet distributions imposed by
the persistence of  gamma distribution behaviour on  individual cells under  merging or splitting  of cells  of any partition
(to an arbitrarily fine limit).

Blackwell and MacQueen~\cite{BlackwellMacQueen}  observed that the Ferguson approach
``involves a rather deep study of the gamma process'' as they proceeded to give  an alternate construction based on 
a  generalised P{\' o}lya urn scheme.
Such metaphors (P{\' o}lya urn draws, stick-breaking, Chinese restaurant table selection) have  become  the popular motivation for  
the Dirichlet process and the  Poisson-Dirichlet  process ${\rm PD(\alpha,\theta})$
used to model discrete systems such as random partitions of the integers.
Given $n$ integers, the number $K_n$ of partition blocks  is analogous to the number of  tables occupied by the 
first $n$ customers for the Chinese restaurant process.
The variable $\lim_{n\to\infty}n^{-\alpha}K_n$  has the generalised Mittag-Leffler distribution  ${\rm ML(\alpha,\theta})$,
also known as the $\alpha$-diversity of ${\rm PD(\alpha,\theta})$.
Pursuing the restaurant metaphor, M\"{o}hle~\cite{Mohle} added a cocktail bar  in order to construct a
three-parameter Mittag-Leffler distribution  ${\rm ML(\alpha,\beta,\gamma})$, 
referencing  Tomovski {\it et al.}~\cite{Tomovski} for its explicit functional  form 
(G\'{o}rska {\it et al.}~\cite{Gorska}  and Tomovski {\it et al.}~\cite{Tomovski} followed a complex analytic derivation as discussed in Section~\ref{sec:3par}).
${\rm ML(\alpha,\beta,\gamma})$ is equivalent to $P^{\gamma}_{\alpha,\beta}(t)$ of this paper.


Akin to Ferguson and in keeping with Feller's  proof that $E_\alpha(-x)$ is the Laplace transform  of $P_\alpha(t)\equiv 1-F_\alpha(1\vert t)$,
we have chosen  to work directly with distributions of continuous variables  on $[0,\infty)$
to construct $P^{\gamma}_{\alpha,\beta,\theta}(t)$, which subsumes the generalised Mittag-Leffler distribution  $P_{\alpha,\theta}(t)$
as the special  case $\beta=\gamma=1$.
It is worth exploring  more general choices of  ($\beta,\gamma$), 
which may lead to    four-parameter analogues  of the  two-parameter Poisson-Dirichlet distribution.
 

We note that we need not have made specific reference to  fractional integrals. 
It would have sufficed to refer solely to convolutions of distributions, given that the  Riemann-Liouville fractional integral  
$\{I_+^\nu\, f_\alpha(\cdot\vert t)\}(x) \equiv \{h_\nu\star f_\alpha(\cdot\vert t)\}(x)$  where $h_\nu(x)=x^{\nu-1}/\Gamma(\nu)$
is a particular instance of the Laplace  convolution of densities.
While this might have made for an exclusively  probabilistic  narrative, Mittag-Leffler functions are so intimately associated  with fractional calculus 
that it seemed logical to 
import the language of fractional integrals into our probabilistic reasoning.
Finally, it is well worth exploring choices of mixing distribution other than the gamma distribution.
For example, a Poisson distribution  would induce a discrete sum over   weighted  fractional integrals.
This is but one avenue of future exploration.


 \bibliography{MittagLeffler.bib}{}
\bibliographystyle{plain} 
\end{document}